\documentclass[letterpaper, 10 pt, conference]{ieeeconf} 

\IEEEoverridecommandlockouts                              
\overrideIEEEmargins

\usepackage{cite}
\usepackage{amsmath,amssymb,amsfonts,amsbsy}
\usepackage{algorithmic}
\usepackage{graphicx}
\usepackage{textcomp}
\usepackage{xcolor}
\usepackage{array}
\usepackage{float}
\usepackage{multirow}
\usepackage{times}
\usepackage{comment}
\usepackage{url,theorem}
\usepackage{comment}
\usepackage{svg}

{\theorembodyfont{\slshape}\newtheorem{theorem}{Theorem}[section]}
{\theorembodyfont{\slshape}\newtheorem{proposition}[theorem]{Proposition}}
{\theorembodyfont{\slshape}\newtheorem{lemma}[theorem]{Lemma}}
{\theorembodyfont{\slshape}}
{\theorembodyfont{\slshape}\newtheorem{corollary}[theorem]{Corollary}}
{\theorembodyfont{\upshape}\newtheorem{definition}[theorem]{Definition}}
{\theorembodyfont{\upshape}}
{\theorembodyfont{\upshape}\newtheorem{remark}[theorem]{Remark}}
{\theorembodyfont{\upshape}}

\newcommand{\setR}{\mathbb{R}}

\newcommand{\graphA}{\mathcal{A}}

\newcommand{\graphC}{\mathcal{C}}

\newcommand{\graphK}{\mathcal{K}}
\newcommand{\graphR}{\mathcal{R}}
\newcommand{\graphS}{\mathcal{S}}
\newcommand{\graphT}{\mathcal{T}}

\newcommand{\graphW}{\mathcal{W}}
\newcommand{\asset}{\mathcal{A}_{\graphW\graphC}}

\newcommand{\Lagr}{\mathcal{L}}
\newcommand{\tangcone}{\graphT_{\asset}}
\newcommand{\nonnegR}{\mathbb{R}_{\geq0}}
\newcommand{\posR}{\mathbb{R}_{>0}}
\newcommand{\setRn}{\mathbb{R}^{n}}
\newcommand{\setRm}{\mathbb{R}^{m}}

\newcommand{\Sontaggammul}[3]{
\begin{cases}-\frac{a(#2)+\gamma\sqrt{a(#2)^2+\|b(#3)^\top\|^4_2}}{\|b(#3)^\top\|^2_2}b(#3)^\top   &\text{if } \|b(#3)^\top\|^2_2>0\\
   0  &\text{if } \|b(#3)^\top\|^2_2 =0\\ 
  \end{cases}}

\title{\LARGE \bf
	On unifying control barrier and Lyapunov functions using QP and Sontag's formula with an application to tumor dynamics 
}

\author{Jarne J.H. van Gemert, Mircea Lazar, Siep Weiland
	\thanks{All authors are with the Department of Electrical Engineering, Eindhoven University of Technology, The Netherlands, E-mail of corresponding author: {\tt\small j.j.h.v.gemert@tue.nl}.}
}

\begin{document}
\maketitle
\thispagestyle{plain}
\pagestyle{plain}
\begin{abstract}
A common tool in system theory for formulating control laws that achieve local asymptotic stability are Control Lyapunov functions (CLFs), while Control Barrier functions (CBFs) are typically employed to enforce safety constraints. Combining these two types of functions is of interest, because it leads to stabilizing controllers with safety guarantees. A common approach to merge CLFs and CBFs is to solve an optimization problem where both CLF and CBF inequalities are imposed as constraints. In this paper, we show via an example from the literature that this approach can lead to undesirable behavior (i.e., slow convergence and oscillating inputs). Then, we propose a novel cost function that penalizes the deviation from Sontag's formula by using a state-dependent weighting matrix. We show that by minimizing the developed cost function subject to a CBF constraint, local asymptotic stability is obtained with an explicit domain of attraction, without using a CLF constraint. To deal with vanishing properties of the weight matrix as the state approaches the equilibrium, we introduce a hybrid continuous control law that recovers Sontag's formula locally. The effectiveness of the developed hybrid stabilizing control law based on CLFs and CBFs is illustrated in stabilization of a 3D tumor model, subject to physiological constraints (i.e., all states must be positive), which yields useful insights into optimal cancer treatment design.
\end{abstract}

\section{Introduction}\label{Sec: Introduction}
Control Lyapunov functions (CLFs), introduced in \cite{artstein1983stabilization},  have been commonly used to develop stabilizing control laws for nonlinear systems. Based on the CLF theorem,  analytically derived ``universal'' control laws can be developed such as Sontag's  ``universal'' control law introduced in \cite{sontag1989Universal}. Another way to use CLFs is by constructing a point-wise minimum norm (PMN) controller based on a quadratic programming (QP) problem, such as the one introduced in \cite{freeman1996robust}. 
Closely related to CLFs are Control Barrier functions (CBFs). The latter are safety-related functions used to enforce safety constraints for controlled systems. CBFs define a set of safe states for the system and act as a barrier to prevent the system from entering undesirable or unsafe regions. Safety of trajectories can be enforced by solving a QP problem which uses the CBF condition as an explicit constraint, as introduced in \cite{ames2014control,ames2017}.  
By also adding the CLF decrease condition to such a QP problem, one can combine CLFs and CBFs and achieve safety together with local asymptotic stability (LAS), which is desirable for most real-life systems. For example, for biological models, such as tumor models, physiological constraints require that the state trajectories must remain positive at all times, \cite{doban2015evolutionary}. 

In the original CBF-QP problem formulation introduced in \cite{ames2014control,ames2017,ames2019control} a cost function was minimized that penalizes the deviation from the computed control action and a desired nominal controller, e.g., such as Sontag's unconstrained stabilizing control law. However, this approach does not necessarily guarantee asymptotic stability. Subsequently, the CLF-CBF-QP problem was introduced, see e.g., \cite{ames2014control}, which utilizes an explicit CLF decrease constraint in order to  guarantee stability and minimizes the norm of the computed control action. This method required the introduction of an additional parameter, i.e., a slack variable, $\delta$, to ensure feasibility of the combined CLF and CBF constraints. The introduction of $\delta$, presented another challenge; the states are not guaranteed to converge towards the desired equilibrium. To address this challenge, different methodologies were proposed in recent studies. 
In \cite{jankovic2018robust}, the original CLF-CBF-QP problem was modified such that the CLF condition is satisfied if the CBF constraint is inactive, which yields LAS 
without explicitly constructing a domain of attraction (DOA). However, the utilized cost function therein still makes a trade-off between minimizing the norm of the computed control action and the weighted norm of $\delta$. Similarly, in \cite{tan2021undesired}, it was proposed to combine the CLF-CBF-QP formulation with a cost function that penalizes the deviation of the computed control action from a nominal, known stabilizing controller. In this approach LAS can be guaranteed if the weight of $\delta$ is properly tuned, but in principle, this optimal tuning should be done for every initial state, which is impractical. 

In this paper we show for an example from \cite{tan2021undesired} that although both methods above provide safety and LAS, the tuning of the weight on $\delta$ can result in undesired behaviour, such as getting stuck into an extremely slow convergence behavior or generating highly oscillating inputs. Motivated by this, in this paper we derive an alternative solution to unify CLFs and CBFs based on the original \emph{CBF-QP} formulation \cite{ames2014control}. As proposed therein we only use a CBF constraint, we minimize a cost function that penalizes the deviation of the computed control action from Sontag's unconstrained stabilizing control law (i.e., a nominal controller) and we introduce a novel, state-dependent weighting matrix which implicitly guarantees that the CLF condition holds, whenever this is feasible. In this way, we remove the need to explicitly incorporate a relaxed CLF constraint and as such, we remove the need to use an additional variable $\delta$ and the problems coming with it. However, the proposed approach suffers from a different challenge, i.e., the state dependent weighting matrix vanishes when the state approaches the equilibrium. To solve this issue, we introduce a hybrid stabilization formula which switches from solving the developed CBF-QP problem to Sontag's formula when the CBF constraint becomes inactive. We prove that the designed hybrid Sontag-CBF-QP solution (or S-CBF-QP for short) yields a continuous control law and an enlarged domain of attraction compared to the CLF-CBF-QP solutions in the literature. 

To showcase the benefits of the developed S-CBF-QP stabilizing solution, we consider the challenging problem of stabilizing a 3D nonlinear tumor model \cite{doban2015evolutionary}. This model describes the tumor, resting and hunting immune cells as competing species and can represent several therapeutic strategies by altering key parameters in the state equations. For this paper, the control input is chosen such that it influences a parameter that can be directly influenced by immunotherapy, i.e. representing the effect of immunotherapy on the tumor and healthy cells. Given this model represents a biological system, it is subject to physiological constraints, i.e. all states are constrained to be positive. While the original autonomous tumor model is guaranteed to have positive states, the controlled tumor model is not. When applying an unconstrained controller to the tumor model, such as Sontag's unconstrained stabilizing control law, the controlled states can become negative and result in physiologically unrealistic trajectories. Therefore, for this tumor model, we design a CBF that restricts trajectories to the positive orthant. Then the hybrid S-CBF-QP method can be applied to safely stabilize the tumor dynamics to a desired healthy equilibrium.

\section{Preliminaries}\label{Sec: preliminaries}
Let $\setR$, $\nonnegR$ and $\posR$ denote the field of real numbers, the set of non-negative reals and the set of positive reals, respectively. 
For a vector $x\in \setRn$, $x_i$ denotes the $i$-th element of $x$, $\|x\|_2=\sqrt{x_1^2+...+x_n^2}$ denotes the two-norm of $x$ and $\|x\|_2^2=x_1^2+...+x_n^2=x^\top x$ the squared two-norm of $x$.  For any suitable function $f\in\setR^{n\times n}$ we introduce the notation $\|x\|_{f(x)}^2$ to denote $=x^\top f(x)x$. For a matrix $A\in \setR^{n\times n}$, $A^\top$ denotes its transpose and $A\succ 0$ indicates that ``$A$ is positive definite'', i.e. for all $x\in \setRn\setminus\{0\}$ it holds that $x^\top Ax>0$ and $A=A^\top$.  
A set $\graphS\subseteq\setRn$ is called a proper set if it has a non-empty interior. Moreover, it is called $\xi$-proper for some $\xi\in\setR^n$ if it contains $\xi$ in its interior. Let $\partial\graphS$ denote the boundary of $\graphS$ and $int(\graphS)$ the interior of $\graphS$. The distance from a point $x\not\in\graphS$ to the set $\graphS$ is given by $\|x\|_{\graphS}:=\text{inf}_{y\in\graphS}\|x-y\|$. For a closed set $\graphS$, 
the Boulingand's tangent cone is defined by 
\begin{equation}
    \graphT_\graphS(x)=\biggl\{y:\liminf_{\tau\rightarrow 0}\frac{\|x+\tau y\|_{\graphS}}{\tau}=0\biggr\},
\end{equation}
following Definition 4.6 of \cite{blanchini2008set}.
A function $\alpha\;:\;\nonnegR\rightarrow\nonnegR$ belongs to class $\graphK$ if it is continuous, strictly
increasing and $\alpha(0)=0$. Additionally, if  $\text{lim}_{s\rightarrow\infty}\alpha(s)=\infty$, then $\alpha$ belongs to class $\graphK_\infty$. 
A function $\alpha\;:\;\setR\rightarrow\setR$ belongs to \emph{extended class} $\graphK$ \cite{ames2016control} if it is strictly increasing, $\alpha(0)=0$ and $\text{lim}_{s\rightarrow a}\alpha(s)=\infty$, $\text{lim}_{s\rightarrow\- -b}\alpha(s)=-\infty$ for some $a,b\in\posR$.

Consider the continuous-time, time-invariant, nonlinear system 
\begin{equation}\label{eq:2}
    \Dot{x}(t)=\Bar{f}(x(t),u(t)), \quad t\in \nonnegR,
\end{equation}
where $x(t)\in \setRn$ and $u(t)\in \setRm$ are the continuous-time state and input trajectories and where $\bar f:\setRn\times\setRm\rightarrow\setRn$ is a globally Lipschitz continuous function. For this paper, only the class of control systems which are affine with respect to the input are considered, i.e., 
\begin{equation}\label{eq:2.1} 
    \Dot{x}(t)=f(x(t))+g(x(t))u(t), \quad t\in \nonnegR,
\end{equation}
where $x(t)\in \setRn$ and $u(t)\in \setRm$ are the continuous-time state and input trajectories and where $f:\setRn\rightarrow\setRn$ and
$g:\setRn\rightarrow \setRn \times \setR^m$ are globally Lipschitz continuous functions. 
The solution of system \eqref{eq:2.1} with initial condition $x(0)=x_0$ and input $u$ at time $t$ is denoted by $x(t,u,x_0)$.
For brevity, any such solution is denoted by $x(t)$ or $x$ if $x_0$, time $t$ and $u$ are understood. A pair $(x_e,u_e)$, denotes an equilibrium point of system \eqref{eq:2.1}, if $f(x_e)+g(x_e)u_e=0$. 

\begin{definition}[\hspace{-0.01cm}\cite{Khalil:1173048}]\label{Def: eq}
\emph{(i)} An equilibrium pair $(x_e,u_e)$ of system \eqref{eq:2.1}, with known control input $u(t)$, is \emph{Lyapunov stable} if for any $\epsilon>0$ there exists $\delta>0$ such that for all corresponding state trajectories of \eqref{eq:2.1} it holds that $\|x(0)-x_e\|\leq \delta\Rightarrow\|x(t)-x_e\|\leq\epsilon$ for all $t\geq0$. \emph{(ii)} We call an equilibrium pair $(x_e,u_e)$ of system \eqref{eq:2.1}, with control input $u(t)$, \emph{attractive in} $\graphS\subseteq\setRn$ if for each $x(0)\in\graphS$ it holds that all corresponding state trajectories of \eqref{eq:2.1} satisfy $\lim_{t\rightarrow\infty}\|x(t)-x_e\|=0$. \emph{(iii)} We call an equilibrium pair $(x_e,u_e)$ of system \eqref{eq:2.1}, with control input $u(t)$, asymptotically stable (AS) in $\graphS$ if it is Lyapunov stable and attractive in $\graphS$.
\end{definition}

\begin{definition}[\hspace{-0.01cm}\cite{Khalil:1173048}]\label{Def: control invariant}
    A set $\graphS\subseteq\setRn$ is called a controlled invariant set for system \eqref{eq:2.1}, if there exists a state-feedback law $u(t):=\kappa(x(t))$, such that for each $x(0)\in \graphS$, $x(t)\in \graphS$ for all $t\geq 0$.
\end{definition}

Next, we define a safe set as follows:
\begin{align}\label{eq: safeset}
    \graphC &= \{x\in\setRn:h(x)\geq0\},
    \end{align}
    with boundary and interior defined as
    \begin{align}
    \partial\graphC &=\{x\in\setRn:h(x)=0\},\\
    \textit{int}(\graphC) &= \{x\in\setRn:h(x)>0\},
\end{align}
where $h:\setRn\rightarrow\setR$ is a twice differentiable function referred to as a barrier function. 
For this paper we consider \emph{concave} functions for $h$, which result in \emph{convex} safe sets. 

\begin{definition}[\hspace{-0.01cm}\cite{ames2019control}]\label{Def: CBF}
    Let a safe set $\graphC$ be defined by \eqref{eq: safeset} with respect to a barrier function $h$. Then $h$ is called a Control Barrier function (CBF) for system \eqref{eq:2.1}, if there exists an extended class $\graphK$ function $\alpha_h$ and a input $u\in\setRm$ such that
\begin{equation}
    \begin{aligned}
        \nabla h(x)^\top(f(x)+g(x)u)\geq -\alpha_h(h(x)),\quad \forall x\in\graphC.
    \end{aligned}
\end{equation} 
\end{definition}

\begin{proposition}[\hspace{-0.01cm}\cite{ames2014control}]
    If $h$ is a Control Barrier function for system \eqref{eq:2.1}, then $\graphC$ is a controlled invariant set.
\end{proposition}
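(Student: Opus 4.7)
The plan is to use the CBF inequality to construct a feedback law and then apply a comparison argument to establish forward invariance of $\graphC$.

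First I would show that the CBF definition yields a nonempty set of admissible inputs at each state. For each $x\in\graphC$, define
\[
 K(x) := \{u\in\setRm : \nabla h(x)^\top(f(x)+g(x)u)\geq -\alpha_h(h(x))\}.
\]
By Definition~\ref{Def: CBF}, $K(x)\neq\emptyset$ for every $x\in\graphC$. The next step is to extract a state feedback $\kappa:\graphC\to\setRm$ with $\kappa(x)\in K(x)$; this can be done for instance by taking the pointwise minimum-norm selection (a QP), which is a standard construction producing a locally Lipschitz $\kappa$ on $\textit{int}(\graphC)$ when $g(x)\nabla h(x)\neq 0$ on $\partial\graphC$. (Alternatively, one can invoke a measurable selection theorem for the set-valued map $K$, which suffices for Carath\'eodory solutions.)

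Next I would analyze the closed-loop dynamics $\dot x = f(x)+g(x)\kappa(x)$. Along any solution starting at $x(0)\in\graphC$, the scalar function $y(t):=h(x(t))$ satisfies
\[
 \dot y(t) = \nabla h(x(t))^\top\bigl(f(x(t))+g(x(t))\kappa(x(t))\bigr) \geq -\alpha_h(y(t))
\]
as long as $x(t)\in\graphC$. Since $\alpha_h$ is of extended class $\graphK$, it is continuous with $\alpha_h(0)=0$, and the scalar comparison ODE $\dot z = -\alpha_h(z)$ with $z(0)=h(x(0))\geq 0$ admits the trivial equilibrium $z\equiv 0$ as a lower fence: $z(t)\geq 0$ for all $t\geq 0$. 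A standard comparison lemma (e.g., Lemma 3.4 of Khalil) then yields $y(t)\geq z(t)\geq 0$, i.e. $h(x(t))\geq 0$, which is exactly $x(t)\in\graphC$.

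Finally, I would close the loop by a continuation argument: let $[0,T)$ be the maximal interval on which $x(t)\in\graphC$; the comparison above prevents $h(x(t))$ from reaching a strictly negative value in finite time, so either $T=\infty$ or the solution blows up inside $\graphC$, which is excluded by the assumed global Lipschitz continuity of $f,g$. Hence $x(t)\in\graphC$ for all $t\geq 0$, which is exactly the controlled invariance of $\graphC$ in the sense of Definition~\ref{Def: control invariant}.

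The main obstacle is the selection step: the pointwise existence of $u\in K(x)$ does not automatically give a feedback $\kappa(x)$ with enough regularity to make the closed-loop ODE well-posed. I would handle this by exhibiting the minimum-norm QP selection (guaranteed feasible by the CBF condition, and continuous under a mild regularity assumption on $\nabla h$ and $g$), which is precisely the perspective the rest of the paper adopts when constructing the S-CBF-QP controller.
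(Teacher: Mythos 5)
The paper offers no proof of this proposition --- it is quoted verbatim from \cite{ames2014control} --- so there is nothing in-paper to compare line by line; what you have written is essentially the standard argument from that literature (nonempty admissible-input map $K(x)$, a pointwise selection $\kappa$, then the scalar comparison $\dot h \geq -\alpha_h(h)$ forcing $h(x(t))\geq 0$), and it is sound. It is worth noting that this paper, when it later needs an invariance argument of its own (for $\asset$ in the main theorem), takes the other classical route: Nagumo's theorem via tangent cones, showing $f(x)+g(x)\phi(x)\in\graphT_{\asset}(x)$ on the boundary. Your comparison-lemma route and the Nagumo route are the two standard proofs; yours gives slightly more (a quantitative lower bound $h(x(t))\geq z(t)$, hence asymptotic stability of $\graphC$ when starting outside it in the Ames et al.\ sense), while Nagumo sidesteps the regularity-of-selection issue because it only needs a sub-tangentiality condition on the boundary. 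Two small technical points you should tighten if you write this out fully: (i) Khalil's Lemma 3.4 is stated for upper bounds with a (locally Lipschitz) comparison right-hand side, whereas $\alpha_h$ here is only continuous and you need a lower bound, so either apply it to $-h$ with the minimal solution of $\dot z=-\alpha_h(z)$ or argue directly that no solution of the comparison ODE (and hence no trajectory) can cross zero, using $\alpha_h(s)<0$ for $s<0$; and (ii) the differential inequality is only guaranteed by Definition~\ref{Def: CBF} while $x(t)\in\graphC$, so your continuation argument at the boundary is genuinely needed and should be stated as a first-crossing/sub-tangentiality argument rather than as a blow-up exclusion --- global Lipschitzness of $f,g$ does not by itself make the closed loop with a merely measurable or locally Lipschitz $\kappa$ forward complete, but invariance on the maximal interval of existence is all the definition requires.
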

Next, we provide a control Lyapunov function definition.

\begin{definition}[\hspace{-0.01cm}\cite{Khalil:1173048}]\label{Def: CLF}
Let $W:\setRn\rightarrow\nonnegR$ be a continuously differentiable function that satisfies 
\begin{align}\label{eq: cond1CLF}
&\alpha_1(\|x-x_e\|)\leq W(x)\leq\alpha_2(\|x-x_e\|), \quad \forall x\in\graphS 
\end{align}
where $\graphS\subseteq\setRn$ for some functions $\alpha_1,\alpha_2\in\graphK_\infty$. Then, if there exists a state-feedback control law $u:=\kappa(x)$ with $\kappa(x_e)=u_e$ such that
\begin{align}
&\nabla W(x)^\top (f(x)+g(x)\kappa(x))< 0, \quad \forall x\in\graphS\setminus\{x_e\}, 
\end{align}
the function $W$ is called a Control Lyapunov function (CLF) for system \eqref{eq:2.1} for all $x\in\graphS$.
\end{definition}
\vspace{-1mm}

Note that the above definition corresponds to a regional CLF, i.e., a CLF in $\graphS$. $W$ is a global CLF if $\graphS=\mathbb{R}^n$. Following standard Lyapunov arguments, the set $\graphS$ and any sub-level set of $W$ contained in $\graphS$ is rendered invariant by the feedback law $u=\kappa(x)$ for system \eqref{eq:2.1} \cite{Khalil:1173048}. For this paper, only \emph{convex} functions are considered for $W$, which result in \emph{convex} level sets of $W$. 

\vspace{-3mm}
\begin{definition}[\hspace{-0.01cm}\cite{krstic1998stabilization}]
    The CLF $W$ is said to have the small control property if there exists a continuous state-feedback control law $u=\kappa(x)$ where $\kappa(x_e)=u_e$, that satisfies
    \begin{align}\label{eq:sconprop}
        \nabla W(x)^\top (f(x)+g(x)\kappa(x))< 0,
    \end{align}
for all $x$ in the neighborhood of the equilibrium point $x_e$ with $x\neq x_e$.
\end{definition}

A ``universal'' stabilizing continuous control law based on CLF was introduced by Sontag in \cite{sontag1989Universal}. This ``universal'' control law is explicitly designed to globally stabilize nonlinear systems which are affine with respect to the control input. 

Next, we present a slightly adjusted version of Sontag's formula, valid for systems with multiple inputs, non-zero equilibria and which allows for tuning the rate of decrease of the Lyapunov function.
Let $(x_e,u_e)$ be an equilibrium pair of system \eqref{eq:2.1}, let $W$ be a CLF for system \eqref{eq:2.1}. Then define Sontag's formula as:
\begin{equation}
\label{eq:u_son}
u_{son}(x,u_e) = \kappa(x,u_e)+u_e,
\end{equation}
where
\begin{align*}
&\kappa(x,u_e):=\\&\Sontaggammul{\kappa}{x,u_e}{x}, 
\end{align*}
and where
$a(x,u_e):=\nabla W(x)^\top (f(x)+g(x)u_e)$, $b(x):=\nabla W(x)^\top g(x)$ and $\gamma\in \posR$ influences the convergence rate.
 
\begin{lemma}\label{lemma:Sontag} Consider system \eqref{eq:2.1} with input $u=u_{son}(x,u_e)$ as given in \eqref{eq:u_son}. Assume there exists a CLF $W$ in a $x_e$-proper set $\graphS\subseteq\setRn$ for system \eqref{eq:2.1}. Then the state feedback $u=u_{son}(x,u_e)$ is smooth and continuous at $x_e$ and, if $\graphS$ is rendered invariant by $u$, then the equilibrium point $(x_e,u_e)$ of system \eqref{eq:2.1} is asymptotically stable in $\graphS$. 
\end{lemma}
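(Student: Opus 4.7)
The plan is to split the lemma into two parts: (i) smoothness and continuity of the feedback $u_{son}(\cdot,u_e)$, and (ii) asymptotic stability of the closed-loop system in $\graphS$. The key calculation for both parts is an explicit evaluation of $\dot W$ along closed-loop trajectories, which, due to the specific structure of Sontag's formula, collapses to a single negative-definite expression.

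I would address the regularity claim by a case distinction on whether $\|b(x)^\top\|_2^2$ vanishes. On the open set $\{x\in\graphS:\|b(x)^\top\|_2>0\}$, the expression for $\kappa(x,u_e)$ is a smooth composition of $f$, $g$, $W$ with a square root of a strictly positive quantity divided by a nonzero denominator, so smoothness follows directly from the smoothness of the data. At a point $\bar x\in\graphS\setminus\{x_e\}$ with $b(\bar x)=0$, the CLF property forces $a(\bar x,u_e)=\nabla W(\bar x)^\top f(\bar x)<0$, since the input has no first-order effect on $W$ there and the decrease must come from the drift. A short Taylor expansion of $a+\gamma\sqrt{a^2+\|b^\top\|_2^4}$ for $a$ bounded away from zero and $\|b^\top\|_2$ small shows that the numerator vanishes fast enough that $\|\kappa(x,u_e)\|\to 0$ as $x\to\bar x$, which yields continuity at such points.

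For asymptotic stability, substituting $u=u_{son}(x,u_e)=\kappa(x,u_e)+u_e$ into $\dot W=\nabla W(x)^\top(f(x)+g(x)u)$ and exploiting that $b(x)b(x)^\top=\|b(x)^\top\|_2^2$ yields, on the branch $\|b(x)^\top\|_2>0$, the closed-form identity
\begin{equation*}
\dot W(x) = a(x,u_e) + b(x)\kappa(x,u_e) = -\gamma\sqrt{a(x,u_e)^2+\|b(x)^\top\|_2^4},
\end{equation*}
which is strictly negative whenever $(a(x,u_e),b(x))\neq(0,0)$. On the branch $b(x)=0$ with $x\neq x_e$, we have $\dot W(x)=a(x,u_e)<0$ directly from the CLF property. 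Combining with the class $\graphK_\infty$ lower and upper bounds on $W$ and the assumed invariance of $\graphS$, a standard Lyapunov argument from \cite{Khalil:1173048} yields asymptotic stability of $(x_e,u_e)$ in $\graphS$.

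The main obstacle is continuity of $u_{son}$ at $x_e$ itself, where $a(x,u_e)$ and $b(x)$ vanish simultaneously and the case-split expression is indeterminate in the limit. The standard resolution is to invoke the small control property: a continuous auxiliary feedback $\tilde\kappa$ with $\tilde\kappa(x_e)=u_e$ achieving the CLF decrease near $x_e$ produces a bound relating $a(x,u_e)$ and $\|b(x)^\top\|_2$ from which one deduces $\|\kappa(x,u_e)\|\to 0$ as $x\to x_e$, and hence continuity of $u_{son}$ at the equilibrium.
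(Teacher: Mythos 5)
Your proposal follows essentially the same route as the paper. The stability half is identical: substitute $u_{son}=\kappa+u_e$ into $\dot W$, use $b\,b^\top=\|b^\top\|_2^2$, and obtain $\dot W=-\gamma\sqrt{a^2+\|b^\top\|_2^4}<0$ on $\graphS\setminus\{x_e\}$, then conclude by standard Lyapunov arguments together with the assumed invariance of $\graphS$; you are in fact slightly more careful than the paper on the branch $b(x)=0$, $x\neq x_e$, where you correctly note that the CLF property forces $a(x,u_e)=\nabla W(x)^\top f(x)<0$ (the paper's computation implicitly divides by $\|b^\top\|_2^2$ and leaves this case tacit). For the regularity half the paper simply asserts the small control property and cites Theorem~1 of Sontag, whereas you attempt the continuity argument directly; your treatment of continuity at $x_e$ itself via the small control property mirrors the paper's (with the shared caveat that this property is not literally among the lemma's hypotheses).

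The one step in your argument that does not go through as stated is the Taylor expansion at a point $\bar x\in\graphS\setminus\{x_e\}$ with $b(\bar x)=0$ and $a(\bar x,u_e)<0$. For $a\to\bar a<0$ and $\|b^\top\|_2\to 0$ one has
\begin{equation*}
a+\gamma\sqrt{a^2+\|b^\top\|_2^4}\;\longrightarrow\;\bar a+\gamma|\bar a|=|\bar a|(\gamma-1),
\end{equation*}
so the numerator vanishes only when $\gamma=1$; in that case $\|\kappa\|\sim\|b^\top\|_2^3/(2|\bar a|)\to 0$ as you claim, but for the paper's general $\gamma\in\posR$ with $\gamma\neq 1$ the quotient behaves like $|\bar a|\,|\gamma-1|/\|b^\top\|_2$ and blows up, so $\kappa$ is not even bounded near such points. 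This is not purely your problem: the paper's appeal to Sontag's Theorem~1 covers the universal formula with $\gamma=1$, so the lemma's smoothness claim for arbitrary $\gamma$ silently relies on $b(x)\neq 0$ away from $x_e$ (or on $\gamma=1$). Still, as written your expansion asserts a limit that is false for $\gamma\neq 1$, so you should either restrict that step to $\gamma=1$, assume $\nabla W(x)^\top g(x)\neq 0$ on $\graphS\setminus\{x_e\}$, or flag the restriction explicitly as the paper should have.
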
 

\begin{proof}
Given $W$ is a CLF for system \eqref{eq:2.1} for equilibrium pair $(x_e,u_e)$, we can use the CLF condition \eqref{eq: cond1CLF}, to establish that
$\nabla W(x_e)=0$ and thus
\begin{align}
    a(x_e,u_e)=b(x_e)=\kappa(x_e,u_e)=0,
\end{align} 
which results in $u_{son}(x_e,u_e)=u_e$.
Let us prove that $u_{son}$ asymptotically stabilizes the equilibrium point $(x_e,u_e)$ of system \eqref{eq:2.1} for all $x\in \setRn$. For simplicity, $a(x,u_e)$ is denoted by $a$, $b(x)$ is denoted by $b$ and $u_{son}(x,u_e)=\kappa(x,u_e)+u_e$ is denoted by $u_{son}=\kappa+u_e$. Then we have that
\begin{equation*}
    \begin{aligned}
        \nabla W(x)^\top&(f(x)+g(x)u_{son})\\
        =&\nabla W(x)^\top(f(x)+g(x)(\kappa+u_e))\\
        =&\nabla W(x)^\top(f(x)+g(x)u_e)+\nabla W(x)^\top g(x)\kappa(x)\\
        =&a+b\kappa\\
        =&a+bb^\top\frac{-a-\gamma\sqrt{a^2+\|b^\top \|_2^4}}{\|b^\top \|^2_2}\\
        =&a+\|b^\top \|^2_2\frac{-a-\gamma\sqrt{a^2+\|b^\top \|_2^4}}{\|b^\top \|^2_2}\\
        =&-\gamma\sqrt{a^2+\|b^\top \|_2^4}<0, \quad \forall x\in\graphS \; \setminus \{x_e\}.
    \end{aligned}
\end{equation*}
Thus $u_{son}$ asymptotically stabilizes equilibrium point $(x_e,u_e)$ of system \eqref{eq:2.1} for all $x\in \graphS\subseteq\setRn$. The claim follows from standard Lyapunov arguments, see, for example \cite{Khalil:1173048}.

Additionally, the CLF $W$ satisfies the small control property as $u_{son}(x_e,u_e)=x_e$ and it satisfies \eqref{eq:sconprop}. Following Theorem 1 of \cite{sontag1989Universal}, we have that $u_{son}(x,u_e)$ is smooth on $\graphS\subseteq\setRn\setminus\{x_e\}$ and continuous at $x_e$.
\end{proof}

\subsection{CBF-QP}
Although Sontag's control law can globally stabilize equilibria, it cannot guarantee satisfaction of constraints on controlled state trajectories, which is a substantial limitation for many real-life systems. As a result, researchers explored the approach of combining CLFs with CBFs to obtain stability and safety guarantees. 

Within this context, 

two commonly used quadratic programming (QP) problems have been formulated in order to compute a pointwise input while satisfying the safety constraints defined by a CBF. Given a desired control input $u_{nom}(x)$, originally a CBF-QP was defined as follows in \cite{ames2014control}.\newline

\noindent\textbf{CBF-QP problem:} 
\begin{equation}\label{eq: basis cbf-qp}
    \begin{aligned}
        \min_{u} \quad & ||u-u_{nom}(x)||^2_2 \\
        \textrm{s.t.} \quad
        &\nabla h(x)^\top f(x)+\nabla h(x)^\top g(x)u \geq-\alpha_h(h(x)).
    \end{aligned}
\end{equation}
Although this standard CBF-QP problem guarantees the forward invariance of the safe set $\graphC$, it does not have any (local) asymptotically stability guarantees, \cite{cortez2019control}.   
Therefore, an alternative problem was introduced in \cite{ames2014control} as follows.\newline

\noindent\textbf{CLF-CBF-QP problem:} 
\begin{equation}\label{eq: basis CLF-CBF-QP}
    \begin{aligned}
        \min_{u,\delta} &\quad  ||u||^2_2 +p\delta^2\\
        \textrm{s.t.} \quad
        &\nabla W(x)^\top f(x)+\nabla W(x)^\top g(x)u\leq-\alpha_W(W(x))+\delta\\
        &\nabla h(x)^\top f(x)+\nabla h(x)^\top g(x)u \geq-\alpha_h(h(x)),
    \end{aligned}
\end{equation}
where $\alpha_\graphW(W(x))\in\graphK_\infty$, $p>0$ and $\delta$ is an additional optimization parameter to guarantee feasibility. 
This baseline CLF-CBF-QP problem was later modified/analyzed in \cite{jankovic2018robust,tan2021undesired} in order to guarantee local asymptotic stability (LAS).

\subsection{Motivating example and problem statement}\label{subsec: Example}
Next, we analyze the sensitivity of the CLF-CBF-QP problem defined in \cite{tan2021undesired}, to the weight $p$ of the parameter $\delta$, based on a linear system example taken from \cite{tan2021undesired}, i.e. 
consider system \eqref{eq:2.1} with 
\begin{align}\label{eq:ex system}
    f(x)=&-\begin{bmatrix}
        x_2\\
        x_1
    \end{bmatrix},\quad
    g(x)=\begin{bmatrix}
        0\\1
    \end{bmatrix},
\end{align}
with the desired equilibrium point given by $(x_e,u_e)=\left(\begin{pmatrix}0& 0\end{pmatrix}^\top,0\right)$. The nominal controller is taken as $u_{son}(x)$ as defined in \eqref{eq:u_son}. The technique of \cite{boyd1994linear} was used to compute a quadratic CLF $W(x)=x^\top Px$ where 
\begin{equation*}
    P=\begin{bmatrix}
        3.4142 & -2.4142\\
   -2.4142 & 2.4142
   \end{bmatrix}.
\end{equation*}
The CBF is given by $h(x)=-0.1x_1^2-0.15x_1x_2-0.1x_2^2$ \cite{tan2021undesired}, with the corresponding convex safe set depicted as the white area in Figure \ref{fig:motivexamp}.

\begin{figure}[H]
    \centering
    \includegraphics[width=0.98\linewidth]{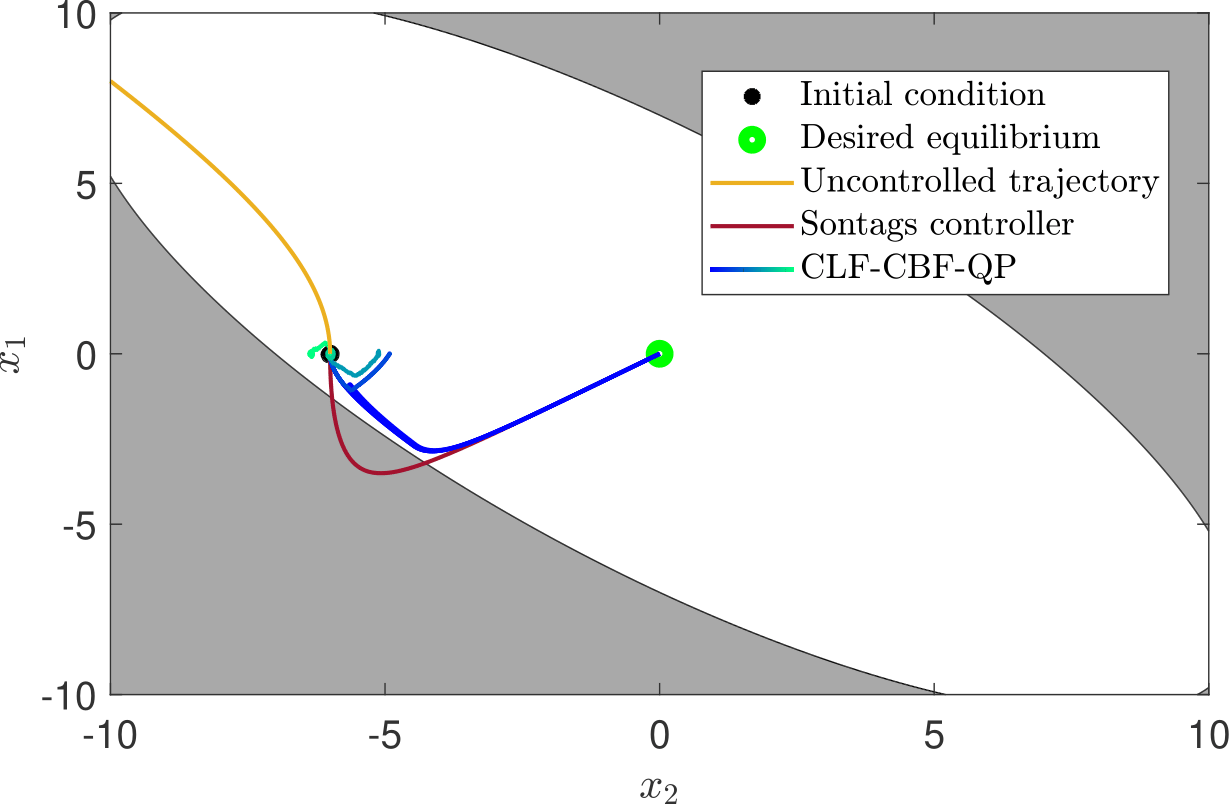}
\end{figure}
\vspace{-3mm}
\begin{figure}[H]
    \centering
    \includegraphics[width=1\linewidth]{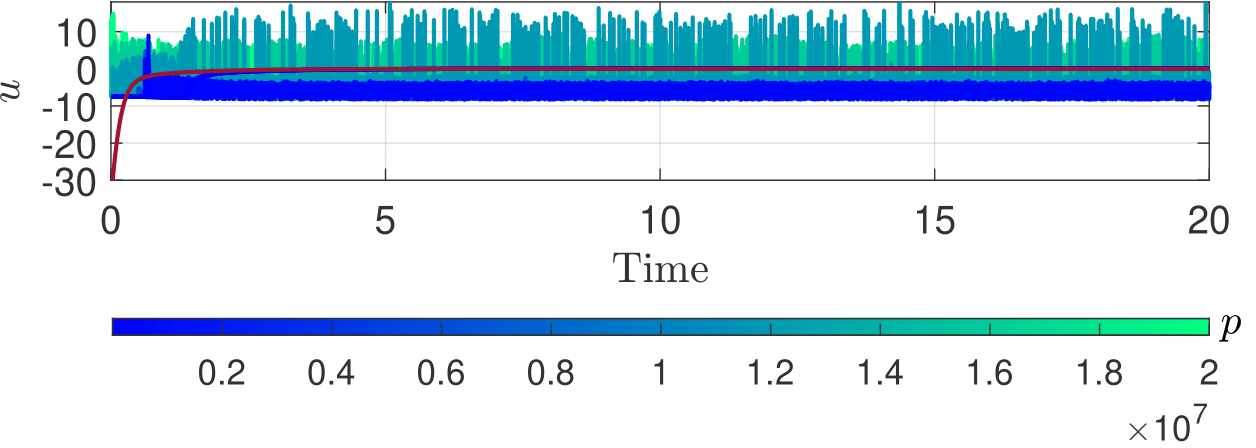}
    \caption{Example with the standard CLF-CBF-QP problem, for different $p$ with the values of $p$ defined in the color bar.}
\label{fig:motivexamp}
\end{figure}

As it can be seen in Figure \ref{fig:motivexamp}, for some initial condition and different values of $p$, the CLF-CBF-QP problem yields a highly oscillating input. Additionally, for higher values of $p$, trajectories get stuck into an extremely slow converging behavior. This is caused by the fact that the high penalty on $\delta$ makes the CLF constraint active, which conflicts with the CBF constraint. It is worth to point out that we obtained a similar behavior using the CLF-CBF-QP problem variations in both \cite{jankovic2018robust} and \cite{tan2021undesired}. 

It is also important to note that for CLF-CBF-QP problems, in current research, the domain of attraction (DOA), - that is, the region where the closed-loop equilibrium is asymptotically stable - has not been explicitly characterized; except for the trivial DOA estimate provided by the largest level set of the CLF contained inside the safe set, which can be conservative.

\emph{Problem statement: } Given the above analysis of CLF-CBF-QP problems, our objective is to develop a stabilizing, continuous control law based on the original CBF-QP formulation \cite{ames2014control}, which guarantees LAS without imposing the relaxed CLF constraint and therefore removing the need of the additional parameter $\delta$. Moreover, we would like to guarantee LAS for an explicitly defined DOA estimate, which is less conservative than the largest level set of the CLF contained inside the safe set.  

\section{Main results}
Consider the nonlinear control affine system \eqref{eq:2.1} with a CLF $W$ and a CBF $h$. As typically done, we assume that there exists an equilibrium pair $(x_e,u_e)$ for system \eqref{eq:2.1}, where the state variable $x_e$ belongs to interior of the safe set $\graphC$. 

Next we introduce an alternative CBF-QP formulation.\newline
\noindent\textbf{S-CBF-QP problem} 
\begin{equation}\label{eq: Son-QP}
    \begin{aligned}
        \min_{u} \quad & ||u-u_{son}(x)||^2_{g(x)^\top \nabla W(x)\nabla W(x)^\top g(x)} \\
        \textrm{s.t.} \quad &\nabla h(x)^\top f(x)+\nabla h(x)^\top g(x)u\geq -\alpha_h(h(x)),
    \end{aligned}
\end{equation} 
where the nominal controller is defined as the modified version of Sontag's ``universal'' control law defined in \eqref{eq:u_son}. For simplicity of exposition define 
\begin{align*}
    Q(x)&:=g(x)^\top \nabla W(x)\nabla W(x)^\top g(x),\\
    L_fh(x)&:=\nabla h(x)^\top f(x),\\
    L_gh(x)&:=\nabla h(x)^\top g(x).
\end{align*}
The optimal control input resulting from the optimization problem is given by
\begin{equation}\label{eq:armin Son-CBF-qp}
\begin{aligned}
    u^\ast (x)=\;\text{arg} \min_u \; &\|u-u_{son}(x)\|^2_{Q(x)}\\
    \textrm{s.t.} \quad &L_fh(x)+L_gh(x)u\geq -\alpha_h(h(x)).
\end{aligned}
\end{equation} 
Above, recall that \[\|u-u_{son}(x)\|^2_{Q(x)}=(u-u_{son}(x))^\top Q(x)(u-u_{son}(x)).\]
A property of the CLF $W$, is that its gradient satisfies $\nabla W(x_e)=0$. Consequently, the cost function of the S-CBF-QP problem is not well defined as $x$ approaches $x_e$, i.e., it vanishes asymptotically. Therefore, we introduce a hybrid stabilization formula which switches from solving the developed CBF-QP problem to Sontag's formula  when the CBF constraint becomes inactive. To that end let us define two regions 
\begin{align}
    \graphR_1 &:=\{x\in\graphC\;|\; L_fh(x)+\alpha_h(h(x))\\ &\hspace{3.5cm}+L_gh(x)u_{son}(x)\geq 0\},\nonumber \\
    \graphR_2 &:=\{x\in\graphC\;|\; L_fh(x)+\alpha_h(h(x))\\ &\hspace{3.5cm}+L_gh(x)u_{son}(x) \leq 0\}.\nonumber
\end{align}
We assume that $\graphR_1$ is a $x_e$-proper set. The hybrid S-CBF-QP control law is given as follows:
\begin{equation}\label{Eq: Hybrid control}
\begin{aligned}
       &\phi (x):=
       \begin{cases}
        u_{son}(x) & \text{if } x\in\graphR_1\\
        u^\ast(x) & \text{if }x\in\graphR_2
       \end{cases}. 
\end{aligned} 
\end{equation}

Before presenting the main result, we first present some properties of the S-CBF-QP problem in \eqref{eq:armin Son-CBF-qp} and of the hybrid S-CBF-QP control law in \eqref{Eq: Hybrid control}.

\begin{lemma}\label{Lemma: continuity CBF-qp}
The S-CBF-QP problem in \eqref{eq:armin Son-CBF-qp} is feasible and has an unique solution for all $x\in\graphR_2 $. Additionally, the resulting control input $u^\ast (x)$ is Lipschitz continuous for all $x\in \graphR_2$.
\end{lemma}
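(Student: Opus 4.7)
The plan is to treat the three claims—feasibility, uniqueness, and Lipschitz continuity—in sequence, using the KKT conditions of the S-CBF-QP as the central tool. First, feasibility follows immediately from the CBF assumption: since $h$ is a CBF for \eqref{eq:2.1} and $\graphR_2 \subseteq \graphC$, Definition \ref{Def: CBF} guarantees that for every $x \in \graphR_2$ there exists some $u$ satisfying $L_fh(x) + L_gh(x)u \geq -\alpha_h(h(x))$, so the feasible set of \eqref{eq:armin Son-CBF-qp} is non-empty.

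For uniqueness, I would argue that at any $x$ in the interior of $\graphR_2$ the defining inequality is strict, so $u_{son}(x)$ is infeasible and the optimizer must saturate the CBF constraint, giving the equality $L_gh(x)\, u^\ast(x) = -L_fh(x) - \alpha_h(h(x))$. Combining this with the KKT stationarity condition $Q(x)(u^\ast(x) - u_{son}(x)) = \tfrac{1}{2}\lambda\, L_gh(x)^\top$ and complementarity yields a closed-form expression for $u^\ast(x)$ and its multiplier $\lambda^\ast(x)$; the dual feasibility $\lambda^\ast(x)\geq 0$ is automatic because on $\graphR_2$ we have $L_gh(x)\,(u^\ast(x)-u_{son}(x)) = -L_fh(x)-\alpha_h(h(x)) - L_gh(x)u_{son}(x) \geq 0$. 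For Lipschitz continuity I would then observe that this closed form is built from the maps $f$, $g$, $\nabla h$, $\nabla W$, $\alpha_h\circ h$ and $u_{son}$, all of which are locally Lipschitz on $\graphR_2$: $u_{son}$ is smooth on $\graphC\setminus\{x_e\}$ by Lemma \ref{lemma:Sontag}, and $x_e \notin \graphR_2$ since $\graphR_1$ is assumed to be $x_e$-proper. Moreover the denominator $L_gh(x)$ appearing in the closed-form must be bounded away from zero locally on the interior of $\graphR_2$ (otherwise feasibility of an active barrier constraint would fail, by definition of $\graphR_2$), so local Lipschitz continuity of $u^\ast$ follows at once.

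The main obstacle I anticipate is the uniqueness step when the input dimension exceeds one: the weighting matrix $Q(x) = g(x)^\top \nabla W(x)\nabla W(x)^\top g(x)$ has rank at most one, so the cost is only positive semidefinite and a direct application of KKT does not automatically single out a single minimizer whenever $\nabla h(x)^\top g(x)$ is not parallel to $\nabla W(x)^\top g(x)$. Handling this will require either restricting to scalar inputs (where the active linear constraint alone already determines $u^\ast(x)$), invoking a transversality condition between $\nabla W^\top g$ and $\nabla h^\top g$, or selecting the minimum-norm KKT solution via a pseudo-inverse and verifying that this canonical selection is itself locally Lipschitz on $\graphR_2$. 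Once a unique selection has been pinned down, the remaining verifications are routine.
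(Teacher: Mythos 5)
Your route is essentially the paper's: analyze the KKT system of \eqref{eq:armin Son-CBF-qp}, use the definition of $\graphR_2$ to conclude the barrier constraint is active there, read off a closed-form $u^\ast(x)$ together with a nonnegative multiplier, and get Lipschitz continuity from the fact that $L_gh(x)$ cannot vanish on $\graphR_2$. Two of your sub-arguments are in fact cleaner than the paper's: you obtain feasibility directly from Definition~\ref{Def: CBF} (the paper infers it only after exhibiting the KKT point), and you prove Lipschitz continuity by composing Lipschitz ingredients of the explicit formula, where the paper instead invokes Theorem~3.1 of \cite{hager1979lipschitz} via full rank of the constraint matrix $M(x)=[-L_gh(x)]$ --- both hinge on exactly the same fact, $L_gh$ bounded away from zero, and your justification of that fact (if $L_gh(x)=0$ at a point where $L_fh(x)+\alpha_h(h(x))<0$, the CBF inequality would be infeasible there, contradicting Definition~\ref{Def: CBF}) is more explicit than the paper's one-line remark.

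The obstacle you flag in your last paragraph is genuine, and you should know that the paper does not actually resolve it: the paper's proof simply asserts that the cost is convex ``i.e.\ $Q(x)\succ 0$ for all $x\in\graphC$,'' but $Q(x)=g(x)^\top\nabla W(x)\nabla W(x)^\top g(x)$ has rank at most one, so for $m>1$ (and even for $m=1$ at points where $\nabla W(x)^\top g(x)=0$) the cost is only positive semidefinite and the minimizer need not be unique, nor need it saturate the constraint --- exactly the scenario you describe when $\nabla h^\top g$ is not parallel to $\nabla W^\top g$, where the zero-cost affine set meets the feasible halfspace. The closed form the paper writes in \eqref{eq: KKT sol R2}, $u^\ast=-\frac{L_fh+\alpha_h(h)}{\|L_gh^\top\|_2^2}L_gh^\top$, is the constraint-saturating (least-norm) selection and coincides with the unique weighted minimizer only in the scalar-input/nondegenerate case. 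So to prove the lemma as stated you would indeed need one of the repairs you list (scalar input, a transversality/positive-definiteness hypothesis, or a canonical Lipschitz selection such as the minimum-norm KKT point); your hesitation is not a defect of your argument but a correct diagnosis of a gap that the paper's own proof papers over.
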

For the proof we refer to \textbf{\textit{cite arXiv}} due to space limitations.

\begin{proof} The Lagrangian $\Lagr$
$(x,u,\lambda)$ 
for the S-CBF-QP problem is given by
\begin{equation}
\begin{aligned}\Lagr&=\|u-u_{son}(x)\|^2_{Q(x)}\\&-\lambda (L_fh(x)+\alpha_h(h(x))+L_gh(x)u), 
\end{aligned}
\end{equation}
where $\lambda$ is the Lagrange multiplier. For brevity, the argument $x$ is omitted in what follows. According to the Karush-Kuhn-Tucker (KKT) conditions \cite{boyd2004convex}, the solutions of the S-CBF-QP program are optimal and unique if the following conditions hold with respect to $u$:
\begin{equation}\label{eq: KKT conditions}
    \begin{aligned}
        \frac{\partial\Lagr}{\partial u}=(u-u_{son})^\top Q -L_gh\lambda &=0,\\ \lambda(L_fh+\alpha_h(h)+L_gh u)&=0.
    \end{aligned}
\end{equation}
Additionally, these conditions hold if and only if the cost is convex, i.e. $Q(x) \succ 0$ for all $x\in\graphC$ and the constraints are affine, \cite{boyd2004convex}. 
For $x\in\graphR_2$ the KKT conditions result in
\begin{align}\label{eq: KKT conditions R2}
    L_fh+\alpha_h(h)+L_ghu&=0,\\
    \lambda &\geq 0.\label{eq: KKT conditions R2 pt2}
\end{align} 
From \eqref{eq: KKT conditions} and \eqref{eq: KKT conditions R2 pt2} the following solution is obtained
\begin{equation}
\begin{aligned}\label{eq: KKT sol R2}
 \lambda&=-\left(\frac{L_fh+\alpha_h(h)}{\|L_gh^\top\|_2^2}L_gh^\top+u_{son}\right)\frac{QL_gh^\top}{\|L_gh^\top\|_2^2},\\
 u^\ast&=-\frac{L_fh+\alpha_h(h)}{\|L_gh^\top\|_2^2}L_gh^\top.
\end{aligned}
\end{equation} 
This solution holds for all $x\in \graphR_2 $. Additionally, the solution is well defined as $L_gh$ is bounded away from 0 in any bounded subset of $\graphR_2 $ as $h$ is a CBF for system \eqref{eq:2.1}. 
Since the KKT conditions are met and the unique solution for the S-CBF-QP problem is given by \eqref{eq: KKT sol R2}, the S-CBF-QP problem is feasible for all $x\in\graphR_2$.
 
Lastly, we need to prove Lipschitz continuity of $u^\ast(x)$ for all $x\in\graphR_2$. 
The coefficient matrix of the S-CBF-QP problem is given by
\begin{align}
    M(x)=
    \begin{bmatrix}
    -L_gh(x)\end{bmatrix},\quad\forall x\in \graphR_2. 
\end{align}
Following Theorem 3.1 of \cite{hager1979lipschitz}, in order for $u^\ast(x)$ to be Lipschitz continuous for all $x\in\graphR_2 $, the coefficient matrix $M(x)$ must be full rank for all $x\in\graphR_2$. Given that $L_gh(x)$ is bounded away from zero for all $x\in \graphR_2 $, the matrix $M(x)$ is full rank, ensuring that $u^\ast (x)$ is Lipschitz continuous for all $x\in\graphR_2 $ and thus completing the proof.
\end{proof}

\begin{remark}
    The S-CBF-QP problem is utilized as part of the hybrid S-CBF-QP control law in \eqref{Eq: Hybrid control}, and thus it is only employed for $x\in\graphR_2$ to compute a control input. For $x\in\graphR_1$, Sontag's formula is used instead. Therefore, it is not necessary to analyze Lipschitz continuity and the uniqueness of the solution of the S-CBF-QP problem for $x\in\graphR_1$.
\end{remark}

\begin{corollary}\label{Lemma: continuity hybrid control}
    Assuming that $\graphR_1$ is a $x_e$-proper set, the hybrid control law $\phi(x)$ in \eqref{Eq: Hybrid control} is continuous for all $x\in \graphC$.
\end{corollary}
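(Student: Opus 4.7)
The plan is to verify continuity of $\phi$ on the two closed pieces $\graphR_1$ and $\graphR_2$ separately and then invoke a standard gluing argument, after first checking that the two definitions agree wherever the pieces overlap. Note that $\graphR_1 \cup \graphR_2 = \graphC$ by construction, and that both sets are closed relative to $\graphC$ because the scalar map $x \mapsto L_fh(x) + \alpha_h(h(x)) + L_gh(x)u_{son}(x)$ is continuous on $\graphC$ (the continuity of $u_{son}$ is provided by Lemma \ref{lemma:Sontag} and extends to $x_e$ by the small control property established therein).

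Next I would handle the two pieces. On $\graphR_1$, one has $\phi(x) = u_{son}(x)$, which is smooth on $\graphR_1 \setminus \{x_e\}$ and continuous at $x_e$ by Lemma \ref{lemma:Sontag}; the hypothesis that $\graphR_1$ is $x_e$-proper guarantees that $x_e$ lies in the interior of $\graphR_1$, so no delicate behavior of $u_{son}$ is missed at the equilibrium. On $\graphR_2$, one has $\phi(x) = u^\ast(x)$, which is Lipschitz continuous (hence continuous) by Lemma \ref{Lemma: continuity CBF-qp}. Both of these are essentially direct invocations of the previously established results and require no new work.

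The main obstacle, and the only nontrivial step, is verifying that the two branches agree on the overlap
\[
\graphR_1 \cap \graphR_2 = \{x \in \graphC : L_fh(x) + \alpha_h(h(x)) + L_gh(x) u_{son}(x) = 0\}.
\]
The argument I would use is the following: for any $x$ in this set, the CBF inequality in \eqref{eq:armin Son-CBF-qp} is satisfied with equality by $u = u_{son}(x)$, so $u_{son}(x)$ is feasible; moreover it attains the cost value $\|u_{son}(x) - u_{son}(x)\|^2_{Q(x)} = 0$, which is the minimum possible value of the nonnegative cost. Hence $u_{son}(x)$ is itself a minimizer of the S-CBF-QP, and by the uniqueness of the optimum guaranteed by Lemma \ref{Lemma: continuity CBF-qp} we must have $u^\ast(x) = u_{son}(x)$ throughout $\graphR_1 \cap \graphR_2$. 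With this agreement in hand, the continuous-gluing principle for maps defined on the union of two closed sets yields continuity of $\phi$ on all of $\graphC$, completing the proof.
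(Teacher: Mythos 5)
Your proposal is correct and follows essentially the same route as the paper: continuity of $u_{son}$ on $\graphR_1$ (Lemma \ref{lemma:Sontag}), continuity of $u^\ast$ on $\graphR_2$ (Lemma \ref{Lemma: continuity CBF-qp}), and agreement of the two branches on the switching set, glued over the closed cover $\graphR_1\cup\graphR_2=\graphC$. Your feasibility-plus-zero-cost-plus-uniqueness argument showing $u^\ast(x)=u_{son}(x)$ on the overlap is a worthwhile addition, since the paper merely asserts this coincidence in item \emph{(i)} without justification.
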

The above result directly follows from: 
\begin{itemize}
    \item[\emph{(i)}] The switching condition: for all $x\in\partial\graphR_1\cup\partial\graphR_2$, i.e. when $L_fh(x)+L_gh(x)u_{son}(x)=-\alpha_h(h(x))$, $\phi(x)=u^\ast(x)=u_{son}(x)$;    
    \item [\emph{(ii)}] For any $x\in\graphR_1$,  $\phi(x)= u_{son}(x)$; 
    \item[\emph{(iii)}] The continuity of $u_{son}(x)$ and of $u^\ast(x)$, which were shown in Lemma \ref{lemma:Sontag} and Lemma \ref{Lemma: continuity CBF-qp}, respectively. 
\end{itemize}

\begin{figure}[H] 
    \centering
\includegraphics[width=0.5\textwidth]{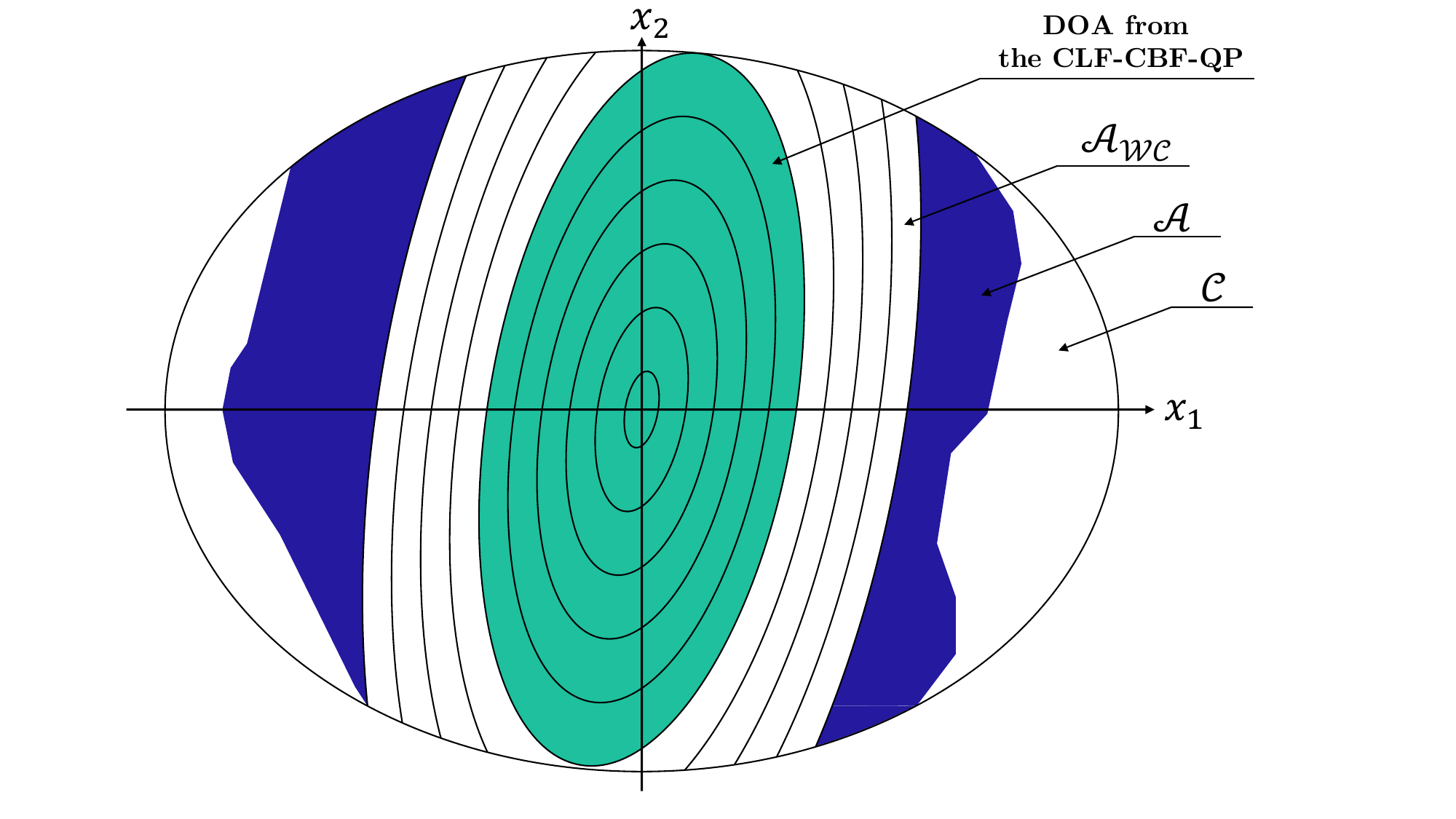}
    \caption{Graphical illustration of the sets $\graphW^\ast$, $\graphC$ and $\asset$.}
    \label{fig:sets}
\end{figure}

\begin{definition}[\hspace{-0.01cm}\cite{Controlsharing}]\label{Def: Control sharing property}
A CLF $W$ and a CBF $h$ for system \eqref{eq:2.1} have the control sharing property, if for some $x_e$-proper set $\graphA\subseteq\setRn$, there exists a control input, such that the following inequalities are simultaneously satisfied:
\begin{align}
        \nabla h(x)^\top (f(x)+ g(x)u)&\geq -\alpha_h(h(x))\quad \forall x\in\graphA,\label{eq:contsharcbf}\\
        \nabla W(x)^\top (f(x)+g(x)u)&<0\quad \forall x\in\graphA\setminus\{x_e\}.\label{eq:contsharclf} 
\end{align}
\end{definition}

Note that $\graphA\subseteq\graphC$.  A set $\graphW$ is defined as the sub-level set of the CLF $W$, i.e. $\graphW:=\{x\in\setRn \;|\; W(x)\leq c\}$ where $c>0$. Let $c^\ast$ be defined such that 
\begin{equation}\label{eq: wstar}
    \begin{aligned}
    c^\ast =\;\text{arg} \max_c \;\; &c\\
    \textrm{s.t.} \quad &c>0\\
    &\graphW\cap\graphC\subseteq\graphA,
    \end{aligned}
\end{equation}
let $\graphW^\ast:=\{x\in\setRn \;|\; W(x)\leq c^\ast\}$ and define $\asset:=\graphW^\ast\cap\graphC$. For a visual example of $\graphA$ and $\asset$, see Figure \ref{fig:sets}. 
The main result is stated next.
\begin{theorem}
Consider system \eqref{eq:2.1} with the equilibrium pair $(x_{e},u_{e})$, a CLF $W$ and a CBF $h$ with the associated safe set $\graphC$ for system \eqref{eq:2.1}. Assume $W$ and $h$ have the control sharing property for all $x\in\graphA$ as in Definition \ref{Def: Control sharing property} and $\graphA,\graphR_1,\asset\neq\emptyset$.
Then, the equilibrium $x_e$ of system \eqref{eq:2.1} in closed-loop with $u=\phi(x)$ as defined in \eqref{Eq: Hybrid control} is asymptotically stable for all $x\in\asset$. Additionally, the hybrid control law $\phi(x)$ renders the set $\asset$ controlled invariant for the closed-loop system \eqref{eq:2.1}-\eqref{Eq: Hybrid control}.
\end{theorem}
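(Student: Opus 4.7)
The plan is to prove the theorem in three stages: (i) establish forward invariance of the safe set $\graphC$ under the hybrid control law $\phi$, (ii) establish strict decrease of the CLF $W$ along closed-loop trajectories on $\asset\setminus\{x_e\}$, and (iii) combine (i)--(ii) with standard Lyapunov arguments to conclude controlled invariance of $\asset$ together with asymptotic stability of $x_e$ from every $x_0\in\asset$.

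For step (i), the CBF inequality $L_fh(x)+L_gh(x)\phi(x)\geq-\alpha_h(h(x))$ is satisfied on both branches of $\phi$: on $\graphR_1$ the inequality is exactly the defining condition of the region while $\phi=u_{son}$, and on $\graphR_2$ it is imposed explicitly as the constraint of the S-CBF-QP~\eqref{eq: Son-QP}, whose optimizer is $\phi=u^\ast$. Combined with continuity of $\phi$ from Corollary~\ref{Lemma: continuity hybrid control}, the standard Nagumo-type argument behind the CBF invariance result stated right after Definition~\ref{Def: CBF} yields forward invariance of $\graphC$.

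For step (ii) I proceed by cases. On $\graphR_1\cap\asset$, $\phi=u_{son}$ and Lemma~\ref{lemma:Sontag} directly gives $\nabla W(x)^\top(f(x)+g(x)\phi(x))<0$ for $x\neq x_e$. On $\graphR_2\cap\asset$ the argument is more subtle: since $\asset\subseteq\graphA$, the control sharing property supplies, for each such $x$, some $\bar u(x)$ satisfying both the CBF and the CLF-decrease inequalities, so $\bar u(x)$ is feasible for the S-CBF-QP. Because $Q(x)=b(x)^\top b(x)$ has rank one, the cost reduces to the scalar quadratic $(v-v_{son}(x))^2$ with $v:=b(x)u$ and $v_{son}(x):=b(x)u_{son}(x)$, so the QP is equivalent to projecting $v_{son}(x)$ onto the image of the CBF-feasible set under the linear map $u\mapsto v$. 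I then split on whether the row vectors $b(x)$ and $L_gh(x)$ are linearly independent: in the independent case, perturbing $u$ within $\ker b(x)$ meets the CBF constraint without moving $v$, hence $v^\ast=v_{son}(x)$ and the Sontag strict decrease is inherited; in the dependent case, writing $L_gh(x)=\mu(x)b(x)$, the feasible image in $v$ becomes a half-line whose active endpoint $v^\ast$, by virtue of the CBF and CLF inequalities satisfied by $\bar u(x)$ under the same proportionality, must lie strictly below $-a(x,u_e)$. In either sub-case $\nabla W(x)^\top(f(x)+g(x)\phi(x))<0$ on $\asset\setminus\{x_e\}$.

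For step (iii), the strict decrease of $W$ implies, via the standard sublevel-set argument, that $\graphW^\ast$ is rendered forward invariant by $\phi$; combined with (i) this gives that $\asset=\graphW^\ast\cap\graphC$ is controlled invariant under $\phi$. The class-$\graphK_\infty$ bounds on $W$ from Definition~\ref{Def: CLF} together with the established strict decrease then yield asymptotic stability of $x_e$ with $\asset$ contained in the domain of attraction, via standard Lyapunov arguments~\cite{Khalil:1173048}. The main obstacle I anticipate is the dependent sub-case of step (ii): since in general $u^\ast\neq\bar u$, one cannot simply inherit the CLF decrease from $\bar u$, and the cleanest route is the scalar reduction $v=b(x)u$, which turns the S-CBF-QP into a one-dimensional projection in which the control sharing inequality directly pins down the relevant endpoint.
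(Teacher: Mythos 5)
This is essentially the paper's own proof: the same $\graphR_1$/$\graphR_2$ case split (Lemma~\ref{lemma:Sontag} on $\graphR_1$; on $\graphR_2$ the rank-one rewriting of the cost as the deviation of $\nabla W(x)^\top \dot x$ from its Sontag value combined with the control sharing property), followed by invariance of $\asset$ and standard Lyapunov arguments --- the paper performs the invariance step via Nagumo's theorem and tangent cones of the convex sets $\graphC$ and $\graphW^\ast$, while you use an equivalent sublevel-set/first-exit argument, and your scalar projection analysis in fact makes explicit what the paper's Case~II only asserts tersely. One detail to tighten: in your dependent sub-case, when $L_gh(x)$ is a \emph{negative} multiple of $b(x)$ the attainable half-line of values $b(x)u$ opens toward $-\infty$, so the inequalities satisfied by $\bar u(x)$ alone do not bound its endpoint; there you must instead use that $x\in\graphR_2$ places $b(x)u_{son}(x)$ at or beyond that endpoint together with Sontag's strict decrease, information already available in your framework, so the fix is immediate.
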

For the proof we refer to \textbf{\textit{cite arXiv}} due to space limitations.

\begin{proof}
$\phi(x)$ switches between two control laws based on a state switching condition, thus we consider two cases.

\textbf{Case I:} $x\in\graphR_1\cap\asset$\\
For $x\in\graphR_1\cap\asset$, we have that $\phi(x)=u_{son}(x)$ and based on the condition which defines the set $\graphR_1$, we have that $L_fh(x)+L_gh(x)u_{son}(x)\geq -\alpha_h(h(x))$ for all $x\in\graphR_1$. In Lemma \ref{lemma:Sontag}, we show that this results in 
\begin{align}\label{eq:SonLyap}
    \nabla W(x)^\top(f(x)+g(x)u_{son})<0, \, \forall x\in\graphR_1\cap\asset\setminus\{x_e\}.
\end{align}

\textbf{Case II:} $x\in\graphR_2\cap\asset$\\
For $x\in\graphR_2\cap\asset$ we have that $\phi(x)=u^\ast(x)$. First, we can express the cost function of the S-CBF-QP problem as follows:
\begin{align}\label{eq: rewritten optimization}
     \|u&-u_{son}\|^2_{Q(x)}\\
    &=\|\nabla W(x)^\top g(x)(u-u_{son})\|^2_2\\
    &=\|\nabla W(x)^\top (f(x)-f(x)+g(x)(u-u_{son}))\|^2_2\\  &=\|\nabla W(x)^\top \Dot{x}(x,u)-\nabla W(x)^\top \Dot{x}(x,u_{son})\|^2_2.\label{eq:min costfunc}
\end{align}
Given that $\graphR_2\cap\asset\subseteq\graphA$, and considering \eqref{eq:min costfunc}, we can use equations \eqref{eq:armin Son-CBF-qp}, \eqref{eq:contsharcbf},  \eqref{eq:contsharclf} and \eqref{eq:min costfunc}, to establish: 
\begin{align}
    \nabla W(x)^\top(f(x)+g(x)\phi(x))&<0\label{eq:clfasset}\\
    \nabla h(x)^\top (f(x)+g(x)\phi(x))&\geq -\alpha_h(h(x))\label{eq:cbfasset}
\end{align}
for all $x\in\graphR_2\cap\asset$. Since  $(\graphR_1\cap\asset)\cup (\graphR_2\cap\asset)=\asset$, then, from  \eqref{eq:SonLyap}, \eqref{eq:clfasset} and \eqref{eq:cbfasset} it follows that 
\begin{align}
\nabla h(x)^\top(f(x)+g(x)\phi(x))&\geq\label{eq:barAsset}\\
-\alpha_h(h&(x))\;\forall x \in \asset,\notag\\
    \nabla W(x)^\top(f(x)+g(x)\phi(x))&<0 \; \forall x \in \asset\setminus\{x_e\}.\label{eq:LyapAsset}
\end{align}

Next, let us prove that $\phi(x)$ renders the set $\asset$ invariant for system \eqref{eq:2.1}. By \eqref{eq:barAsset} and \eqref{eq:LyapAsset}, we have that, $f(x)+g(x)\phi(x)\in\graphT_\graphC(x)$ and $f(x)+g(x)\phi(x)\in\graphT_{\graphW}^\ast(x)$ for all $x\in\partial\asset$, i.e.  $f(x)+g(x)\phi(x)\in\graphT_{\graphC}(x)\cap\graphT_{\graphW^\ast}(x)$. Given that $\graphC$ and $\graphW^\ast$ are both convex sets, it follow that $f(x)+g(x)\phi(x)\in\graphT_{\graphC\cap{\graphW^\ast}}(x)=\graphT_{\asset}(x)$, following \cite{bazaraa1974cones}. According to Nagumo's theorem \cite{blanchini2008set}, if $f(x)+g(x)\phi(x)\in \tangcone$ for all $x\in\partial\asset$, the set $\asset$ is rendered invariant by $\phi(x)$ for system \eqref{eq:2.1}.

Hence, we have that $W$ is a CLF in $\asset$ and since $\asset$ is controlled invariant for the derived hybrid control law, the claim follows using a similar reasoning as in the proof of Lemma~\ref{lemma:Sontag}.
\end{proof}

To demonstrate the effectiveness of the developed hybrid control law \eqref{Eq: Hybrid control}, we follow the example shown in Section \ref{subsec: Example}. The same CLF $W$ and CBF $h$ are used. As can be seen in Figure \ref{fig1}, the states converge to the desired equilibrium while still maintaining the safety guarantees. Additionally, by tuning $\gamma$, the tuning parameter that dictates the convergence rate introduced in Lemma~\ref{lemma:Sontag}, the decrease rate can be tuned to allow for a more conservative input. Note that in Figure \ref{fig1}, the input makes a sharp increase around $2$ seconds. This abrupt change of $u$, corresponds to the moment the CBF constraints go from active to inactive and occurs for all mentioned CBF-QP and CLF-CBF-QP problems. I.e., the abrupt change does not originate from the switching in our hybrid S-CBF-QP control law.

\begin{figure}[H]
    \centering
      \includegraphics[width=1\linewidth]{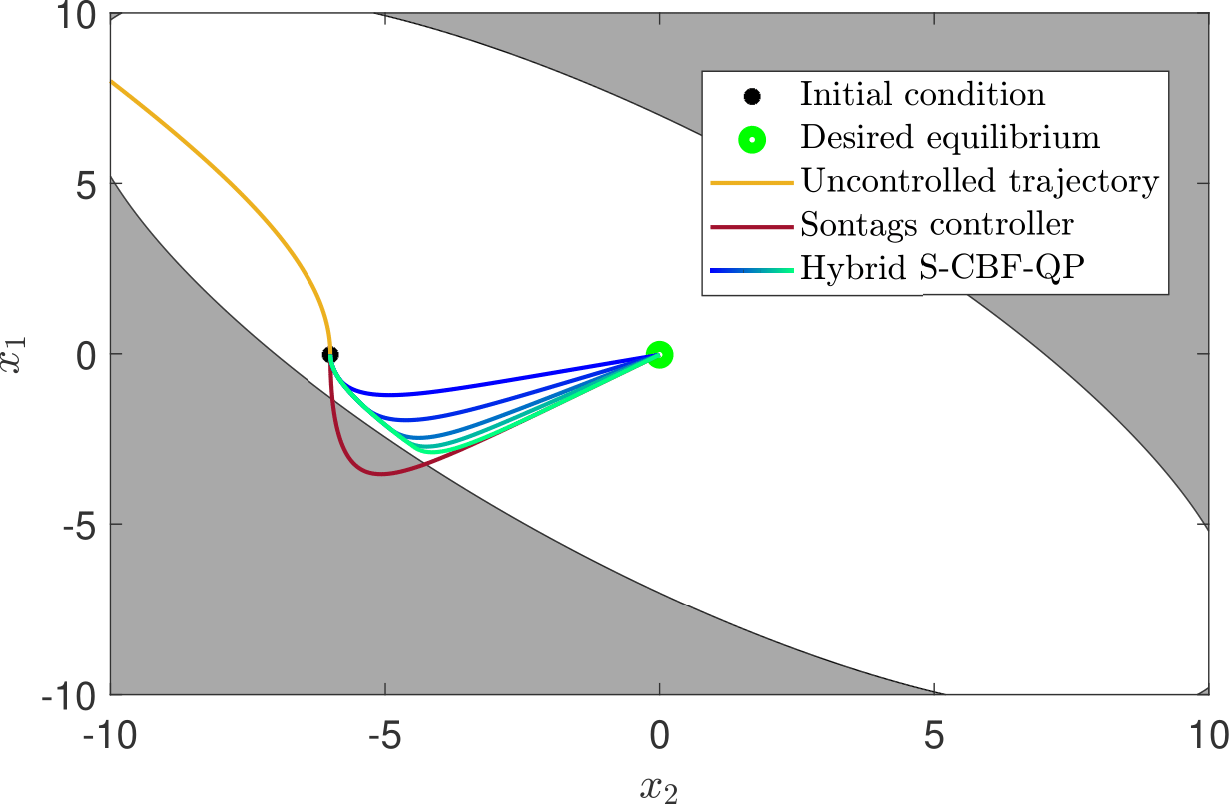}
\end{figure}
\vspace{-7mm}
\begin{figure}[H]
    \centering
    \includegraphics[width=1.02\linewidth]{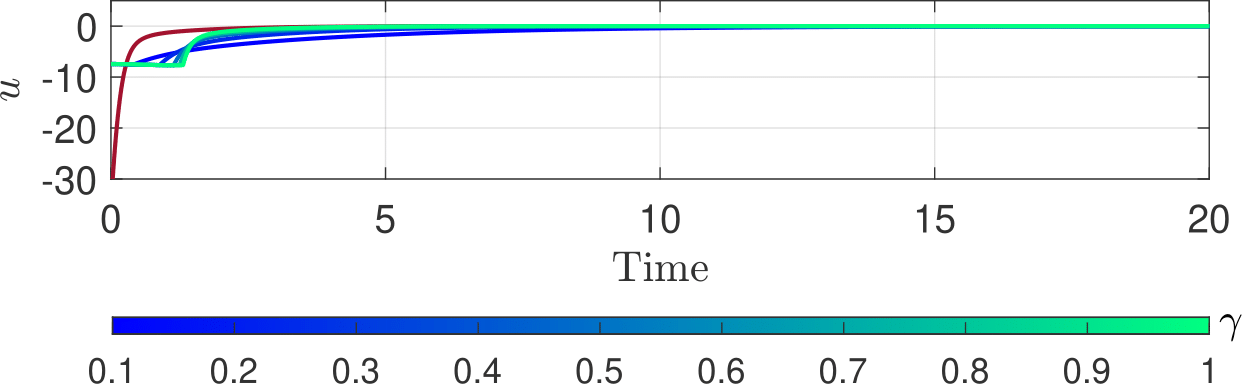}
 \caption{Example \eqref{eq:ex system} with the introduced hybrid S-CBF-QP control law for different $\gamma$ with the values of $\gamma$ defined in the color bar.}
 \label{fig1} 
\end{figure}

\section{Realistic stabilization of tumor dynamics}
In this section, we apply the developed S-CBF-QP to the stabilization of tumor dynamics using the effects of immunotherapy with the developed hybrid control law \eqref{Eq: Hybrid control}.

Consider the nonlinear tumor model as system \eqref{eq:2.1} with 
\begin{equation}\label{eq:2.4}
\begin{aligned}
    f(x)&=\begin{bmatrix}
        R_T x_1-\frac{R_T}{K_T}x_1^2-\frac{\alpha_{TN}R_T}{K_T}x_1x_2\\
        -\alpha_{NT}x_2x_1+\beta x_2x_3\\
        R_Rx_3-\frac{R_R}{K_R}x_3^2-\frac{\beta R_R}{K_R}x_2x_3
    \end{bmatrix},\\ g(x)&=\begin{bmatrix}-\frac{R_T}{K_T}x_1x_2\\0\\0
    \end{bmatrix}
\end{aligned}
\end{equation}
which  describes the regression and progression of normal and tumor cells as proposed in \cite{doban2015evolutionary}. 
The tumor cell population is denoted by $x_1$ and the resting and hunting immune cell populations are denoted by $x_2$ and $x_3$, respectively.

The parameters are chosen as follows, $\alpha_{NT}=0.5$, $\alpha_{TN}=0.9$, $\beta=0.9$, $K_R=K_T=10$ and $R_R=R_T=0.9$ as suggested in \cite{doban2015evolutionary}. 

Next, consider the desired equilibrium point $(x_e,u_e)=\left(\begin{pmatrix}6.4286& 7.1429& 3.5714\end{pmatrix}^\top,-0.4\right)$ representing tumor dormancy \cite{doban2015evolutionary}. By linearizing system \eqref{eq:2.4} around the desired equilibrium point, the technique of \cite{boyd1994linear} can be used to compute a quadratic CLF $W(x)=x^\top Px$ where 
\begin{equation*}
    P=\begin{bmatrix}
    0.3564 & -0.2472 & -0.4017\\
   -0.2472 &  0.4597 &  0.3699\\
   -0.4017 &  0.3699 &  4.6665\\
   \end{bmatrix}.
\end{equation*}
For this system we design and implement three CBFs which are given by $h_1(x)=-(x_1-5)^2+25$, $h_2(x)=1-e^{-x_2}$ and $h_3(x)=1-e^{-x_3}$. All three CBFs are chosen to ensure the positivity of the controlled state trajectories. Additionally, $h_1$ is defined to limit the amount of tumor cells in the body. All three CBFs are implemented as separate CBF constraints. 

\begin{figure}[H]
    \centering
\hspace*{-0.3cm}                                   
\includegraphics[width=0.5\textwidth]{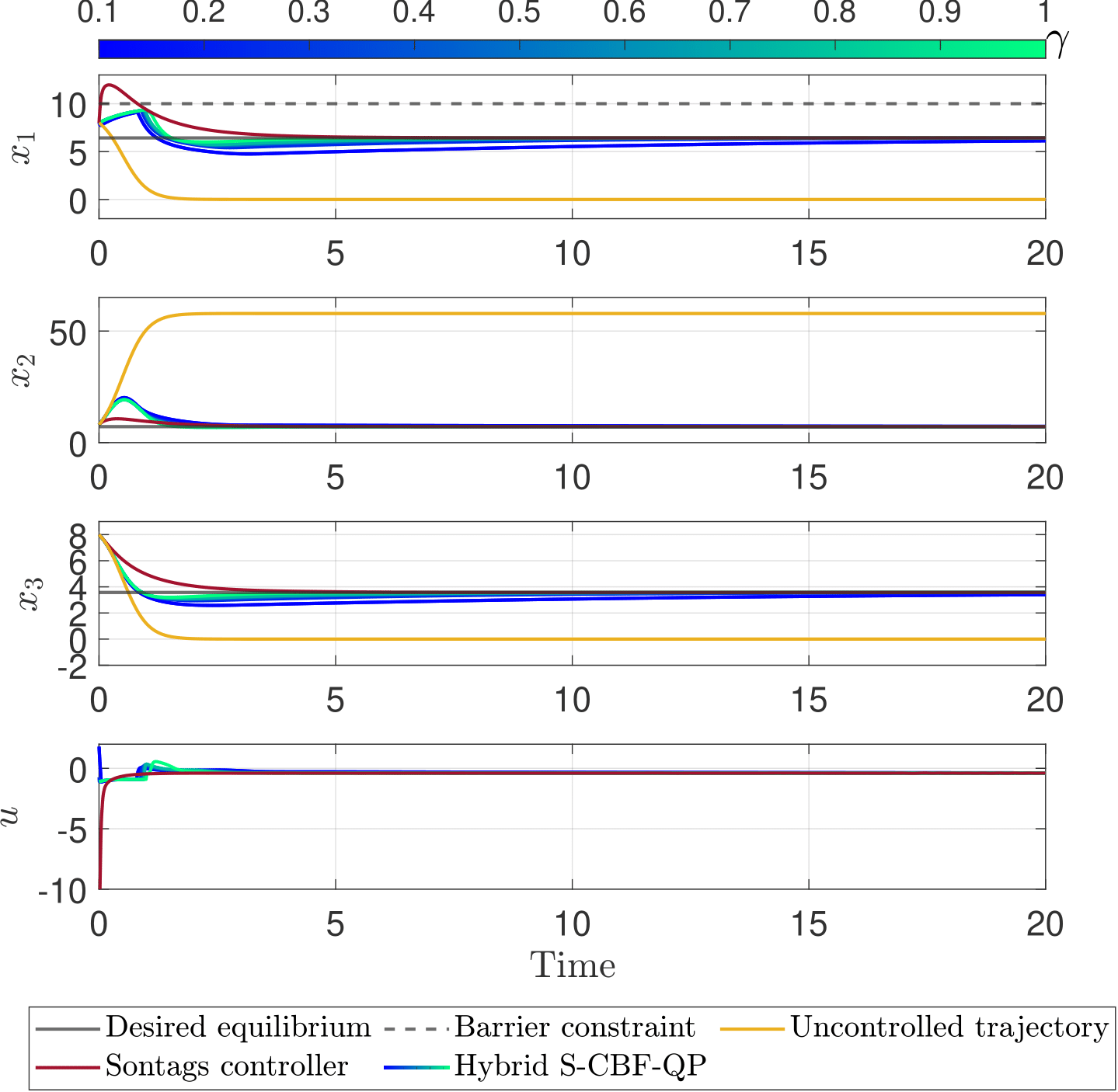}
    \caption{Stabilization of equilibrium representing tumor dormancy with the introduced hybrid S-CBF-QP control law for different $\gamma$ with the values of $\gamma$ defined in the color bar.}
    \label{fig:tumor}
\end{figure}

As can be seen in Figure \ref{fig:tumor}, the trajectories remain positive and the tumor cells within the bound defined by $h_1$, while converging towards the desired equilibrium. Multiple trajectories are shown for different values of $\gamma$. As can be seen, $\gamma$ has a direct effect on the rate with which the states converge towards the equilibrium. For invasive tumors, $\gamma$ can be tuned such that the states converge faster towards the equilibrium. However, for cases with non-invasive tumors, $\gamma$ can be tuned such that the treatment is less invasive, i.e. with less drug usage.

\section{Conclusions}
In this paper, we proposed an alternative CBF-QP problem formulation that uses a state-dependent weighting matrix to implicitly enforce the satisfaction of a CLF constraints, when feasible. Compared to previous CLF-CBF-QP problems, the derived formulation does not require an additional paramter $\delta$, which eliminates the need to tune the penalty of this parameter for every initial state. Based on the developed CBF-QP problem we derive a hybrid control law that switches locally to Sontag's formula and we provide an explicit domain of attraction for the resulting closed-loop system, which is less conservative than previous DOA estimates in the literature. The proposed hybrid control law was used to stabilize a desired equilibrium presenting a state of tumor dormancy. Via this example, we have shown that this method is computationally effective and also adjustable depending on the preference of treatment. 

\bibliographystyle{IEEEtran}
\bibliography{Bibliography}

\begin{thebibliography}{10}
\providecommand{\url}[1]{#1}
\csname url@samestyle\endcsname
\providecommand{\newblock}{\relax}
\providecommand{\bibinfo}[2]{#2}
\providecommand{\BIBentrySTDinterwordspacing}{\spaceskip=0pt\relax}
\providecommand{\BIBentryALTinterwordstretchfactor}{4}
\providecommand{\BIBentryALTinterwordspacing}{\spaceskip=\fontdimen2\font plus
\BIBentryALTinterwordstretchfactor\fontdimen3\font minus \fontdimen4\font\relax}
\providecommand{\BIBforeignlanguage}[2]{{%
\expandafter\ifx\csname l@#1\endcsname\relax
\typeout{** WARNING: IEEEtran.bst: No hyphenation pattern has been}%
\typeout{** loaded for the language `#1'. Using the pattern for}%
\typeout{** the default language instead.}%
\else
\language=\csname l@#1\endcsname
\fi
#2}}
\providecommand{\BIBdecl}{\relax}
\BIBdecl

\bibitem{artstein1983stabilization}
Z.~Artstein, ``Stabilization with relaxed controls,'' \emph{Nonlinear Analysis: Theory, Methods \& Applications}, vol.~7, no.~11, pp. 1163--1173, 1983.

\bibitem{sontag1989Universal}
E.~Sontag, ``\BIBforeignlanguage{English (US)}{A 'universal' construction of artstein's theorem on nonlinear stabilization},'' \emph{\BIBforeignlanguage{English (US)}{Systems and Control Letters}}, vol.~13, no.~2, pp. 117--123, Aug. 1989, funding Information: * Research supported in part by US Air Force Grant 88-0235.

\bibitem{freeman1996robust}
R.~A. Freeman, P.~Kokotovi{\'c}, R.~A. Freeman, and P.~Kokotovi{\'c}, ``Robust control lyapunov functions,'' \emph{Robust Nonlinear Control Design: State-Space and Lyapunov Techniques}, pp. 33--63, 1996.

\bibitem{ames2014control}
A.~Ames, J.~Grizzle, and P.~Tabuada, ``Control barrier functions based quadratic programming with application to adaptive cruise control,'' in \emph{IEEE CDC}, 2014.

\bibitem{ames2017}
A.~D. Ames, X.~Xu, J.~W. Grizzle, and P.~Tabuada, ``Control barrier function based quadratic programs for safety critical systems,'' \emph{IEEE Transactions on Automatic Control}, vol.~62, no.~8, pp. 3861--3876, 2017.

\bibitem{doban2015evolutionary}
A.~Doban and M.~Lazar, ``An evolutionary--type model for tumor immunotherapy,'' \emph{IFAC-PapersOnLine}, vol.~48, no.~20, pp. 575--580, 2015.

\bibitem{ames2019control}
A.~D. Ames, S.~Coogan, M.~Egerstedt, G.~Notomista, K.~Sreenath, and P.~Tabuada, ``Control barrier functions: Theory and applications,'' in \emph{2019 18th European control conference (ECC)}.\hskip 1em plus 0.5em minus 0.4em\relax IEEE, 2019, pp. 3420--3431.

\bibitem{jankovic2018robust}
M.~Jankovic, ``Robust control barrier functions for constrained stabilization of nonlinear systems,'' \emph{Automatica}, vol.~96, pp. 359--367, 2018.

\bibitem{tan2021undesired}
X.~Tan and D.~V. Dimarogonas, ``On the undesired equilibria induced by control barrier function based quadratic programs,'' \emph{arXiv preprint arXiv:2104.14895}, 2021.

\bibitem{blanchini2008set}
F.~Blanchini, S.~Miani \emph{et~al.}, \emph{Set-theoretic methods in control}.\hskip 1em plus 0.5em minus 0.4em\relax Springer, 2008, vol.~78.

\bibitem{ames2016control}
A.~D. Ames, X.~Xu, J.~W. Grizzle, and P.~Tabuada, ``Control barrier function based quadratic programs for safety critical systems,'' \emph{IEEE Transactions on Automatic Control}, vol.~62, no.~8, pp. 3861--3876, 2016.

\bibitem{Khalil:1173048}
\BIBentryALTinterwordspacing
H.~K. Khalil, \emph{{Nonlinear systems; 3rd ed.}}\hskip 1em plus 0.5em minus 0.4em\relax Upper Saddle River, NJ: Prentice-Hall, 2002, the book can be consulted by contacting: PH-AID: Wallet, Lionel. [Online]. Available: \url{https://cds.cern.ch/record/1173048}
\BIBentrySTDinterwordspacing

\bibitem{krstic1998stabilization}
M.~Krstic, H.~Deng \emph{et~al.}, \emph{Stabilization of nonlinear uncertain systems}.\hskip 1em plus 0.5em minus 0.4em\relax Springer, 1998.

\bibitem{cortez2019control}
W.~S. Cortez, D.~Oetomo, C.~Manzie, and P.~Choong, ``Control barrier functions for mechanical systems: Theory and application to robotic grasping,'' \emph{IEEE Transactions on Control Systems Technology}, vol.~29, no.~2, pp. 530--545, 2019.

\bibitem{boyd1994linear}
S.~Boyd, L.~El~Ghaoui, E.~Feron, and V.~Balakrishnan, \emph{Linear matrix inequalities in system and control theory}.\hskip 1em plus 0.5em minus 0.4em\relax SIAM, 1994.

\bibitem{boyd2004convex}
S.~P. Boyd and L.~Vandenberghe, \emph{Convex optimization}.\hskip 1em plus 0.5em minus 0.4em\relax Cambridge university press, 2004.

\bibitem{hager1979lipschitz}
W.~W. Hager, ``Lipschitz continuity for constrained processes,'' \emph{SIAM Journal on Control and Optimization}, vol.~17, no.~3, pp. 321--338, 1979.

\bibitem{Controlsharing}
S.~Grammatico, F.~Blanchini, and A.~Caiti, ``Control-sharing and merging control lyapunov functions,'' \emph{IEEE Transactions on Automatic Control}, vol.~59, no.~1, pp. 107--119, 2014.

\bibitem{bazaraa1974cones}
M.~S. Bazaraa, J.~Goode, and M.~Z. Nashed, ``On the cones of tangents with applications to mathematical programming,'' \emph{Journal of Optimization Theory and Applications}, vol.~13, pp. 389--426, 1974.

\end{thebibliography}

\end{document}